\newcolumntype{M}[1]{>{\centering\arraybackslash}m{#1}}
\newcounter{braid}
\newcounter{strands}
\tikzset{
  knot diagram/every strand/.append style={
    ultra thick,
    black
  },
  show curve controls/.style={
    postaction=decorate,
    decoration={show path construction,
      curveto code={
        \draw [blue, dashed]
        (\tikzinputsegmentfirst) -- (\tikzinputsegmentsupporta)
        node [at end, draw, solid, red, inner sep=5pt]{};
        \draw [blue, dashed]
        (\tikzinputsegmentsupportb) -- (\tikzinputsegmentlast)
        node [at start, draw, solid, red, inner sep=5pt]{}
        node [at end, fill, blue, ellipse, inner sep=5pt]{}
        ;
      }
    }
  },
  show curve endpoints/.style={
    postaction=decorate,
    decoration={show path construction,
      curveto code={
        \node [fill, blue, ellipse, inner sep=5pt] at (\tikzinputsegmentlast) {}
        ;
      }
    }
  }
}
\def\cross{%
  \@ifnextchar^{\message{Got sup}\cross@sup}{\cross@sub}}
\def\cross@sup^#1_#2{\render@cross{#2}{#1}}
\def\cross@sub_#1{\@ifnextchar^{\cross@@sub{#1}}{\render@cross{#1}{1}}}
\def\cross@@sub#1^#2{\render@cross{#1}{#2}}
\def\render@cross#1#2{
  \def\strand{#1}
  \def\crossing{#2}
  \pgfmathsetmacro{\cross@y}{-\value{braid}*\braid@h}
  \pgfmathtruncatemacro{\nextstrand}{#1+1}
  \foreach \thread in {1,...,\value{strands}}
  {
    \pgfmathsetmacro{\strand@x}{\thread * \braid@w}
    \ifnum\thread=\strand
    \pgfmathsetmacro{\over@x}{\strand * \braid@w + .5*(1 - \crossing) * \braid@w}
    \pgfmathsetmacro{\under@x}{\strand * \braid@w + .5*(1 + \crossing) * \braid@w}
    \draw[braid] \pgfkeysvalueof{/tikz/braid start} +(\under@x pt,\cross@y pt) to[out=-90,in=90] +(\over@x pt,\cross@y pt -\braid@h);
    \draw[braid] \pgfkeysvalueof{/tikz/braid start} +(\over@x pt,\cross@y pt) to[out=-90,in=90] +(\under@x pt,\cross@y pt -\braid@h);
    \else
    \ifnum\thread=\nextstrand
    \else
     \draw[braid] \pgfkeysvalueof{/tikz/braid start} ++(\strand@x pt,\cross@y pt) -- ++(0,-\braid@h);
    \fi
   \fi
  }
  \stepcounter{braid}
}
\tikzset{braid/.style={double=\pgfkeysvalueof{/tikz/braid colour},double distance=1pt,line width=2pt,white}}
\newcommand{\braid}[2][]{%
  \begingroup
  \pgfkeys{/tikz/strands=2}
  \tikzset{#1}
  \pgfkeysgetvalue{/tikz/braid width}{\braid@w}
  \pgfkeysgetvalue{/tikz/braid height}{\braid@h}
  \setcounter{braid}{0}
  \let\sigma=\cross
  #2
  \endgroup
}
\def\mOne{\begin{bmatrix}
1 & 0 & 0 & 0 & 0 & 0 & 0 & 0 & 0 \\
0 & 1 & 0 & 0 & 0 & 0 & 0 & 0 & 0 \\
0 & 0 & 1 & 0 & 0 & 0 & 0 & 0 & 0 \\
0 & 0 & 0 & 1 & 0 & 0 & 0 & 0 & 0 \\
0 & 0 & 0 & 0 & 1 & 0 & 0 & 0 & 0 \\
0 & 0 & 0 & 0 & 0 & 1 & 0 & 0 & 0 \\
0 & 0 & 0 & 0 & 0 & 0 & 1 & 0 & 0 \\
0 & 0 & 0 & 0 & 0 & 0 & 0 & 1 & 0 \\
0 & 0 & 0 & 0 & 0 & 0 & 0 & 0 & 1 \\
\end{bmatrix}}
\def\mTwo{\begin{bmatrix}
1 & 0 & 0 & 0 & 0 & 0 & 0 & 0 & 0 \\
0 & 0 & 0 & 0 & 0 & 1 & 0 & 0 & 0 \\
0 & 0 & 0 & 0 & 0 & 0 & 0 & 1 & 0 \\
0 & 0 & 1 & 0 & 0 & 0 & 0 & 0 & 0 \\
0 & 0 & 0 & 0 & 1 & 0 & 0 & 0 & 0 \\
0 & 0 & 0 & 0 & 0 & 0 & 1 & 0 & 0 \\
0 & 1 & 0 & 0 & 0 & 0 & 0 & 0 & 0 \\
0 & 0 & 0 & 1 & 0 & 0 & 0 & 0 & 0 \\
0 & 0 & 0 & 0 & 0 & 0 & 0 & 0 & 1 \\
\end{bmatrix}}
\def\mFour{\begin{bsmallmatrix}
1 & 0 & 0 & 0 & 0 & 0 & 0 & 0 & 0 & 0 & 0 & 0 & 0 & 0 & 0 & 0 & 0 & 0 & 0 & 0 & 0 & 0 & 0 & 0 & 0 & 0 & 0 \\
0 & 0 & 0 & 0 & 0 & 0 & 0 & 0 & 0 & 0 & 0 & 0 & 0 & 0 & 0 & 0 & 0 & 0 & 0 & 0 & 0 & 0 & 0 & 0 & 0 & 1 & 0 \\
0 & 0 & 0 & 0 & 0 & 0 & 0 & 0 & 0 & 0 & 0 & 0 & 0 & 0 & 1 & 0 & 0 & 0 & 0 & 0 & 0 & 0 & 0 & 0 & 0 & 0 & 0 \\
0 & 0 & 0 & 0 & 0 & 0 & 0 & 0 & 0 & 0 & 1 & 0 & 0 & 0 & 0 & 0 & 0 & 0 & 0 & 0 & 0 & 0 & 0 & 0 & 0 & 0 & 0 \\
0 & 0 & 0 & 0 & 0 & 0 & 0 & 0 & 1 & 0 & 0 & 0 & 0 & 0 & 0 & 0 & 0 & 0 & 0 & 0 & 0 & 0 & 0 & 0 & 0 & 0 & 0 \\
0 & 0 & 0 & 0 & 0 & 0 & 0 & 0 & 0 & 0 & 0 & 0 & 0 & 0 & 0 & 0 & 0 & 0 & 0 & 0 & 0 & 1 & 0 & 0 & 0 & 0 & 0 \\
0 & 0 & 0 & 0 & 0 & 0 & 0 & 0 & 0 & 0 & 0 & 0 & 0 & 0 & 0 & 0 & 0 & 0 & 0 & 0 & 1 & 0 & 0 & 0 & 0 & 0 & 0 \\
0 & 0 & 0 & 0 & 0 & 0 & 0 & 0 & 0 & 0 & 0 & 0 & 0 & 0 & 0 & 1 & 0 & 0 & 0 & 0 & 0 & 0 & 0 & 0 & 0 & 0 & 0 \\
0 & 0 & 0 & 0 & 1 & 0 & 0 & 0 & 0 & 0 & 0 & 0 & 0 & 0 & 0 & 0 & 0 & 0 & 0 & 0 & 0 & 0 & 0 & 0 & 0 & 0 & 0 \\
0 & 0 & 0 & 0 & 0 & 0 & 0 & 0 & 0 & 0 & 0 & 0 & 0 & 0 & 0 & 0 & 0 & 1 & 0 & 0 & 0 & 0 & 0 & 0 & 0 & 0 & 0 \\
0 & 0 & 0 & 1 & 0 & 0 & 0 & 0 & 0 & 0 & 0 & 0 & 0 & 0 & 0 & 0 & 0 & 0 & 0 & 0 & 0 & 0 & 0 & 0 & 0 & 0 & 0 \\
0 & 0 & 0 & 0 & 0 & 0 & 0 & 0 & 0 & 0 & 0 & 0 & 0 & 0 & 0 & 0 & 0 & 0 & 0 & 1 & 0 & 0 & 0 & 0 & 0 & 0 & 0 \\
0 & 0 & 0 & 0 & 0 & 0 & 0 & 0 & 0 & 0 & 0 & 0 & 0 & 0 & 0 & 0 & 0 & 0 & 0 & 0 & 0 & 0 & 0 & 0 & 1 & 0 & 0 \\
0 & 0 & 0 & 0 & 0 & 0 & 0 & 0 & 0 & 0 & 0 & 0 & 0 & 1 & 0 & 0 & 0 & 0 & 0 & 0 & 0 & 0 & 0 & 0 & 0 & 0 & 0 \\
0 & 0 & 1 & 0 & 0 & 0 & 0 & 0 & 0 & 0 & 0 & 0 & 0 & 0 & 0 & 0 & 0 & 0 & 0 & 0 & 0 & 0 & 0 & 0 & 0 & 0 & 0 \\
0 & 0 & 0 & 0 & 0 & 0 & 0 & 1 & 0 & 0 & 0 & 0 & 0 & 0 & 0 & 0 & 0 & 0 & 0 & 0 & 0 & 0 & 0 & 0 & 0 & 0 & 0 \\
0 & 0 & 0 & 0 & 0 & 0 & 0 & 0 & 0 & 0 & 0 & 0 & 0 & 0 & 0 & 0 & 0 & 0 & 0 & 0 & 0 & 0 & 0 & 1 & 0 & 0 & 0 \\
0 & 0 & 0 & 0 & 0 & 0 & 0 & 0 & 0 & 1 & 0 & 0 & 0 & 0 & 0 & 0 & 0 & 0 & 0 & 0 & 0 & 0 & 0 & 0 & 0 & 0 & 0 \\
0 & 0 & 0 & 0 & 0 & 0 & 0 & 0 & 0 & 0 & 0 & 0 & 0 & 0 & 0 & 0 & 0 & 0 & 0 & 0 & 0 & 0 & 1 & 0 & 0 & 0 & 0 \\
0 & 0 & 0 & 0 & 0 & 0 & 0 & 0 & 0 & 0 & 0 & 1 & 0 & 0 & 0 & 0 & 0 & 0 & 0 & 0 & 0 & 0 & 0 & 0 & 0 & 0 & 0 \\
0 & 0 & 0 & 0 & 0 & 0 & 1 & 0 & 0 & 0 & 0 & 0 & 0 & 0 & 0 & 0 & 0 & 0 & 0 & 0 & 0 & 0 & 0 & 0 & 0 & 0 & 0 \\
0 & 0 & 0 & 0 & 0 & 1 & 0 & 0 & 0 & 0 & 0 & 0 & 0 & 0 & 0 & 0 & 0 & 0 & 0 & 0 & 0 & 0 & 0 & 0 & 0 & 0 & 0 \\
0 & 0 & 0 & 0 & 0 & 0 & 0 & 0 & 0 & 0 & 0 & 0 & 0 & 0 & 0 & 0 & 0 & 0 & 1 & 0 & 0 & 0 & 0 & 0 & 0 & 0 & 0 \\
0 & 0 & 0 & 0 & 0 & 0 & 0 & 0 & 0 & 0 & 0 & 0 & 0 & 0 & 0 & 0 & 1 & 0 & 0 & 0 & 0 & 0 & 0 & 0 & 0 & 0 & 0 \\
0 & 0 & 0 & 0 & 0 & 0 & 0 & 0 & 0 & 0 & 0 & 0 & 1 & 0 & 0 & 0 & 0 & 0 & 0 & 0 & 0 & 0 & 0 & 0 & 0 & 0 & 0 \\
0 & 1 & 0 & 0 & 0 & 0 & 0 & 0 & 0 & 0 & 0 & 0 & 0 & 0 & 0 & 0 & 0 & 0 & 0 & 0 & 0 & 0 & 0 & 0 & 0 & 0 & 0 \\
0 & 0 & 0 & 0 & 0 & 0 & 0 & 0 & 0 & 0 & 0 & 0 & 0 & 0 & 0 & 0 & 0 & 0 & 0 & 0 & 0 & 0 & 0 & 0 & 0 & 0 & 1 \\
\end{bsmallmatrix}}
\def\mFive{\begin{bsmallmatrix}
1 & 0 & 0 & 0 & 0 & 0 & 0 & 0 & 0 & 0 & 0 & 0 & 0 & 0 & 0 & 0 & 0 & 0 & 0 & 0 & 0 & 0 & 0 & 0 & 0 & 0 & 0 \\
0 & 0 & 0 & 0 & 0 & 0 & 0 & 0 & 0 & 0 & 0 & 0 & 0 & 0 & 0 & 0 & 0 & 0 & 0 & 0 & 1 & 0 & 0 & 0 & 0 & 0 & 0 \\
0 & 0 & 0 & 0 & 0 & 0 & 0 & 0 & 0 & 0 & 1 & 0 & 0 & 0 & 0 & 0 & 0 & 0 & 0 & 0 & 0 & 0 & 0 & 0 & 0 & 0 & 0 \\
0 & 0 & 0 & 0 & 0 & 0 & 0 & 0 & 0 & 0 & 0 & 0 & 0 & 0 & 0 & 1 & 0 & 0 & 0 & 0 & 0 & 0 & 0 & 0 & 0 & 0 & 0 \\
0 & 0 & 0 & 0 & 0 & 0 & 0 & 0 & 1 & 0 & 0 & 0 & 0 & 0 & 0 & 0 & 0 & 0 & 0 & 0 & 0 & 0 & 0 & 0 & 0 & 0 & 0 \\
0 & 0 & 0 & 0 & 0 & 0 & 0 & 0 & 0 & 0 & 0 & 0 & 0 & 0 & 0 & 0 & 0 & 0 & 0 & 0 & 0 & 0 & 0 & 0 & 0 & 1 & 0 \\
0 & 0 & 0 & 0 & 0 & 0 & 0 & 0 & 0 & 0 & 0 & 0 & 0 & 0 & 0 & 0 & 0 & 0 & 0 & 0 & 0 & 1 & 0 & 0 & 0 & 0 & 0 \\
0 & 0 & 0 & 0 & 0 & 0 & 0 & 0 & 0 & 0 & 0 & 0 & 0 & 0 & 1 & 0 & 0 & 0 & 0 & 0 & 0 & 0 & 0 & 0 & 0 & 0 & 0 \\
0 & 0 & 0 & 0 & 1 & 0 & 0 & 0 & 0 & 0 & 0 & 0 & 0 & 0 & 0 & 0 & 0 & 0 & 0 & 0 & 0 & 0 & 0 & 0 & 0 & 0 & 0 \\
0 & 0 & 0 & 0 & 0 & 0 & 0 & 0 & 0 & 0 & 0 & 0 & 0 & 0 & 0 & 0 & 0 & 1 & 0 & 0 & 0 & 0 & 0 & 0 & 0 & 0 & 0 \\
0 & 0 & 0 & 0 & 0 & 0 & 0 & 1 & 0 & 0 & 0 & 0 & 0 & 0 & 0 & 0 & 0 & 0 & 0 & 0 & 0 & 0 & 0 & 0 & 0 & 0 & 0 \\
0 & 0 & 0 & 0 & 0 & 0 & 0 & 0 & 0 & 0 & 0 & 0 & 0 & 0 & 0 & 0 & 0 & 0 & 0 & 0 & 0 & 0 & 0 & 0 & 1 & 0 & 0 \\
0 & 0 & 0 & 0 & 0 & 0 & 0 & 0 & 0 & 0 & 0 & 0 & 0 & 0 & 0 & 0 & 0 & 0 & 0 & 0 & 0 & 0 & 0 & 1 & 0 & 0 & 0 \\
0 & 0 & 0 & 0 & 0 & 0 & 0 & 0 & 0 & 0 & 0 & 0 & 0 & 1 & 0 & 0 & 0 & 0 & 0 & 0 & 0 & 0 & 0 & 0 & 0 & 0 & 0 \\
0 & 0 & 0 & 1 & 0 & 0 & 0 & 0 & 0 & 0 & 0 & 0 & 0 & 0 & 0 & 0 & 0 & 0 & 0 & 0 & 0 & 0 & 0 & 0 & 0 & 0 & 0 \\
0 & 0 & 1 & 0 & 0 & 0 & 0 & 0 & 0 & 0 & 0 & 0 & 0 & 0 & 0 & 0 & 0 & 0 & 0 & 0 & 0 & 0 & 0 & 0 & 0 & 0 & 0 \\
0 & 0 & 0 & 0 & 0 & 0 & 0 & 0 & 0 & 0 & 0 & 0 & 0 & 0 & 0 & 0 & 0 & 0 & 0 & 1 & 0 & 0 & 0 & 0 & 0 & 0 & 0 \\
0 & 0 & 0 & 0 & 0 & 0 & 0 & 0 & 0 & 1 & 0 & 0 & 0 & 0 & 0 & 0 & 0 & 0 & 0 & 0 & 0 & 0 & 0 & 0 & 0 & 0 & 0 \\
0 & 0 & 0 & 0 & 0 & 0 & 0 & 0 & 0 & 0 & 0 & 0 & 0 & 0 & 0 & 0 & 0 & 0 & 0 & 0 & 0 & 0 & 1 & 0 & 0 & 0 & 0 \\
0 & 0 & 0 & 0 & 0 & 0 & 0 & 0 & 0 & 0 & 0 & 0 & 1 & 0 & 0 & 0 & 0 & 0 & 0 & 0 & 0 & 0 & 0 & 0 & 0 & 0 & 0 \\
0 & 0 & 0 & 0 & 0 & 1 & 0 & 0 & 0 & 0 & 0 & 0 & 0 & 0 & 0 & 0 & 0 & 0 & 0 & 0 & 0 & 0 & 0 & 0 & 0 & 0 & 0 \\
0 & 1 & 0 & 0 & 0 & 0 & 0 & 0 & 0 & 0 & 0 & 0 & 0 & 0 & 0 & 0 & 0 & 0 & 0 & 0 & 0 & 0 & 0 & 0 & 0 & 0 & 0 \\
0 & 0 & 0 & 0 & 0 & 0 & 0 & 0 & 0 & 0 & 0 & 0 & 0 & 0 & 0 & 0 & 0 & 0 & 1 & 0 & 0 & 0 & 0 & 0 & 0 & 0 & 0 \\
0 & 0 & 0 & 0 & 0 & 0 & 0 & 0 & 0 & 0 & 0 & 1 & 0 & 0 & 0 & 0 & 0 & 0 & 0 & 0 & 0 & 0 & 0 & 0 & 0 & 0 & 0 \\
0 & 0 & 0 & 0 & 0 & 0 & 0 & 0 & 0 & 0 & 0 & 0 & 0 & 0 & 0 & 0 & 1 & 0 & 0 & 0 & 0 & 0 & 0 & 0 & 0 & 0 & 0 \\
0 & 0 & 0 & 0 & 0 & 0 & 1 & 0 & 0 & 0 & 0 & 0 & 0 & 0 & 0 & 0 & 0 & 0 & 0 & 0 & 0 & 0 & 0 & 0 & 0 & 0 & 0 \\
0 & 0 & 0 & 0 & 0 & 0 & 0 & 0 & 0 & 0 & 0 & 0 & 0 & 0 & 0 & 0 & 0 & 0 & 0 & 0 & 0 & 0 & 0 & 0 & 0 & 0 & 1 \\
\end{bsmallmatrix}}
\def\namedlabel#1#2{\begingroup
 #2%
 \def\@currentlabel{#2}%
 \phantomsection\label{#1}\endgroup
}
\renewcommand{\PrintDOI}[1]{\href{http://dx.doi.org/\detokenize{#1}}{doi: \detokenize{#1}}%
	\IfEmptyBibField{pages}{, (to appear in print)}{}}
\theoremstyle{plain}
\newtheorem*{theorem*}{Theorem}
\newtheorem*{thmex*}{Theorem~\ref{example}}
\newtheorem*{thmasymp*}{Theorem~\ref{thmAsymp}}
\newtheorem{theorem}{Theorem}[section]
\newtheorem{proposition}[theorem]{Proposition}
\newtheorem{remark}[theorem]{Remark}
\newtheorem{example}[theorem]{Example}
\theoremstyle{definition}
\newtheorem{definition}[theorem]{Definition}
\newcommand{\ben}{\begin{enumerate}}
\newcommand{\een}{\end{enumerate}}
\newcommand{\ed}{\end{document}}
\definecolor{rrr}{rgb}{.9,0,.1}
\definecolor{rr}{rgb}{.8,0,.3}
\newcommand{\RomanNumeralCaps}[1]{\MakeUppercase{\romannumeral #1}}
\title[Pointed Racks ]{Pointed Racks and Their Applications to Braid Theory}
\author[A. Apollos]{Angel Apollos}
\address{Hamilton College, Clinton, NY, USA}
\email{aapollos@hamilton.edu }
\author[J. Ceniceros]{Jose Ceniceros}
\address{Hamilton College, Clinton, NY, USA}
\email{jcenicer@hamilton.edu}
\begin{document}

\maketitle

\begin{abstract}
We define a new algebraic structure called a \emph{pointed rack} and use it to construct ambient isotopy invariants of $ n $-braids. We first introduce an integer-valued invariant of braids using pointed racks. This is then strengthened by defining a matrix-valued invariant using racks. Moreover, our invariant determines the rack coloring invariant previously defined for the closure of the braid. Finally, we include examples of braids that are distinguished by these new invariants.

\end{abstract}

\parbox{5.5in} {\textsc{Keywords:} quandles, racks, pointed quandles, pointed racks, braids

                \smallskip
                
                \textsc{2020 MSC:} 57K12}

\section{Introduction}\label{intro}

\emph{Racks}, which are algebraic structures equipped with a self-distributive binary operation satisfying an invertibility condition, were introduced by Fenn and Rourke in 1992 \cite{FR}. These structures have garnered sustained interest from topologists due to their natural application to the study of framed knots and links: the defining axioms of racks correspond precisely to the type II and III Reidemeister moves. Racks are generalizations of \emph{quandles}, algebraic structures that satisfy the rack axioms along with an additional idempotency condition. Quandles were introduced independently by Joyce and Matveev in 1982 \cite{Joyce, Matveev}, and their axioms correspond to all three Reidemeister moves, making them an ideal algebraic structure for studying oriented link equivalence.   

Since the introduction of racks, much work has been devoted to defining computable invariants of oriented framed links derived from racks. These invariants include integer-valued counting invariants, such as the \emph{rack counting invariant} of oriented framed links \cite{N4}. Additionally, a homology and cohomology theory associated with racks was introduced in \cite{FRS}, and cocycle enhancements of the rack counting invariant were defined in \cite{EN1}. Beyond their topological applications, racks are also of intrinsic algebraic interest due to their role in the classification of finite-dimensional pointed Hopf algebras and their relation to set-theoretic solutions of the Yang–Baxter equation \cite{AG,D}. Furthermore, there is a deep connection between racks and the braid actions \cite{BE}. 

Emil Artin introduced braids in \cite{A}. Although braids are widely used to study knots due to Alexander's Theorem, they are intrinsically interesting and have been extensively studied. Although there is a deep connection between racks and braids, a rack counting invariant for braids has not been formalized. The issue with a rack counting invariant of braids is that coloring the top arcs of a braid diagram uniquely determines those at the bottom, so the number of rack colorings of an $n$-braid is determined by the size of the rack set and the number of strands of the braid, which makes it a trivial invariant.

In this paper, we take a different approach to define computable and effective invariants of braids using racks. We generalize the notion of a pointed quandle introduced in \cite{GuPf}. We then define the fundamental pointed rack of an $n$-braid and introduce two new invariants: one integer-valued and one matrix-valued. We also provide explicit examples demonstrating their effectiveness.

This paper is structured as follows. In Section~\ref{Racks}, we review the definition of racks and basic examples. Section~\ref{Braids} reviews the basic definitions and notation of braids. Section~\ref{Pointed Rack} introduces pointed racks and the pointed fundamental rack associated with a braid. In Section~\ref{Counting Invariant}, we define the pointed rack counting invariant for an $n$-braid, along with the rack counting matrix for the same $n$-braid. We also include examples that demonstrate the effectiveness of these invariants and provide proofs of several properties related to them. Finally, in Section~\ref{Example}, we present additional interesting examples of the invariant.

\section{Racks}\label{Racks}
In this section, we will review the definition of algebraic structures that we will use in this article, as well as a review of thier application to knot theory. For more details on racks, see \cite{FR, EN}.

    \begin{definition} \label{rack}
    A \emph{rack} is a set $X$ with a binary operation $\triangleright: X \times X \rightarrow X$ that satisfies 

    \begin{enumerate} 
        \item for all $y \in X$, the map $\beta_y: X \rightarrow X$ defined as $\beta_y(x) = x \triangleright y$ is a bijection, with inverse $\beta_y^{-1}(x)$ denoted by $x \triangleright^{-1}y$, and 
        \item for all $x,y,z \in X$, we have$(x \triangleright y) \triangleright z = (x \triangleright z) \triangleright (y \triangleright z)$
    \end{enumerate}
    
A rack that also satisfies $x \triangleright x  =x $ is called a \emph{quandle}.
\end{definition}
    
In order to simplify the notation, we use the following convention throughout the paper. If the operation of the rack $(X, \triangleright)$ is clear or the operation is not explicitly needed, we will refer to the rack simply as $X$.   

\begin{example}
 \end{example}\label{Rack Examples}
 The following are standard examples of racks and quandles:
\begin{itemize}
\item  \label{dihedralrack}Let $X = \mathbb{Z} / n\mathbb{Z}$ is a quandle with operation  
\[x \triangleright y \equiv (2y - x)\mod n\]
called a \emph{dihedral quandle}. We will use this quandle throughout this article and we will denote this quandle by $R_n$.
        \item Any group $G$ is a quandle with operation 
        \[ x \triangleright y = yx^{-1}y \]
        called the \emph{core quandle} of $G$.
        \item Any $\mathbb{Z}[t^{\pm},s]/(s^2-(1-t)s)-$module $X$ is a rack with operation 
        \[ x\triangleright y = tx+sy\]
        called a \emph{$(t,s)$-rack}.
    \end{itemize}

\begin{definition}
A \emph{rack homorophism} between two racks $(X,\triangleright)$ and $(Y, \ast)$ is a map $\phi: X \rightarrow Y$ such that 
\[ \phi(x_1 \triangleright x_2) = \phi(x_1) \ast \phi(x_2).\]
In the case where $\phi$ is bijective, then $\phi$ is a \emph{rack isomorphism}.
\end{definition}

 Recall that a framed link is a link with a choice of framing curve for each component of the link; more specifically, each framing curve is a longitude of a regular neighborhood of each component. The fundamental rack of an oriented framed link was defined in \cite{FR}. Given an oriented framed link $L$ with diagram $D$, the \emph{fundamental rack of $L$}, denoted $\mathcal{FR}(L)$, is the equivalence classes of rack words in generators corresponding to the arcs in the diagram $D$ under the equivalence relations generated by the rack conditions in Definition~\ref{rack} and the relations at each crossing of $D$.  

In \cite{N4}, the fundamental rack of an oriented framed link was used to define the rack counting invariant of framed links.  Given a finite rack $X$ and an oriented framed link $L$ with diagram $D$, the \emph{rack counting invariant} is defined by $\textup{Col}_X(L)=\vert \textup{Hom}(\mathcal{FR}(L), X) \vert$. Each homomorphism $f \in \textup{Hom}(\mathcal{FR}(L), X)$ assigns to each arc $x_i$ of the diagram an element $f(x_i) \in X$, and is referred to as a \emph{$X$-coloring} of $D$.  

In order for $D$ to  have a valid $X$-coloring the coloring rule in Figure~\ref{figs:ColoringRule} must be satisfied at each crossing in $D$.

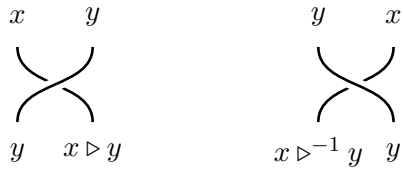
\begin{figure}[H]
\begin{center}
\begin{tikzpicture}
\braid[strands=2,braid start={(-5,0)}]
{\sigma_1^{-1}}
\braid[strands=2,braid start={(-1,0)}]
{\sigma_1}

\node at (-4, .4) {\small$x$}; 
\node at (-3, .4) {\small$y$};
\node at (-4, -1.4) {\small$y$}; 
\node at (-3, -1.4) {\small$x\triangleright y$}; 

\node at (0, .4) {\small$y$}; 
\node at (1, .4) {\small$x$};
\node at (0, -1.4) {\small$x\triangleright^{-1} y$}; 
\node at (1, -1.4) {\small$y$}; 
\end{tikzpicture} 
\end{center}
        \caption{Coloring rules at a positive and negative crossing.}
        \label{figs:ColoringRule}
\end{figure}
Using the above coloring rule, it is a standard exercise to verify that the rack axioms precisely specify the conditions required by the Reidemeister II and III moves to ensure the number of colorings of the diagram $D$ remains the same before and after Reidemeister II and III are applied to $D$, see Figure~\ref{figs:RackAxioms}.

    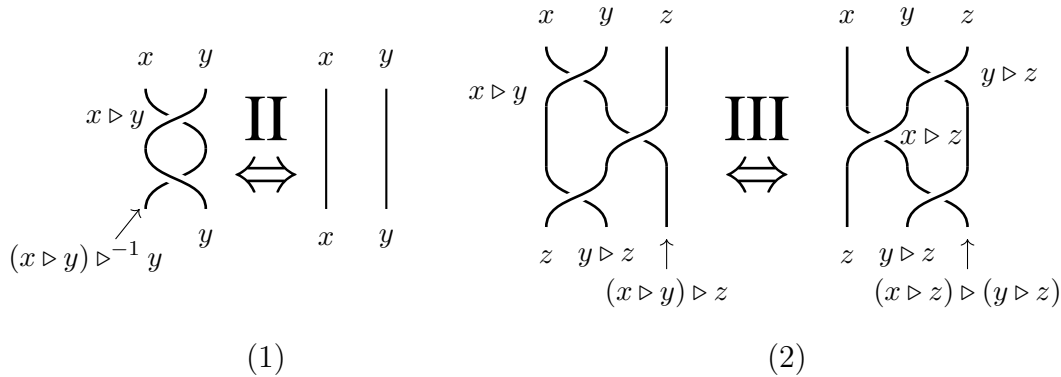
\begin{figure}[H]
    \begin{center}
        \begin{tikzpicture}[scale=.8]
            \braid[strands=2,braid start={(0,0)}]%
            {\sigma_1^{-1}\sigma_1}
            \node[font=\huge] at (3,-.5) {\RomanNumeralCaps{2}};
            \node[font=\Huge] at (3,-1.5) {$\Leftrightarrow$};
            \draw[thick] (4,0) -- (4,-2);
            \draw[thick] (5,0) -- (5,-2);
         
            \node at (1, .5) {\small$x$}; 
            \node at (2, .5) {\small$y$};
            \node at (4, .5) {\small$x$}; 
            \node at (5, .5) {\small$y$};
            \node at (.5, -.5) {\small$x\triangleright y$}; 
            \node at (4, -2.5) {\small$x$}; 
            \node at (5, -2.5) {\small$y$};
            \draw[->] (.5,-2.5) -- (.9,-2);
            \node at (0, -2.8) {\small$(x \triangleright y) \triangleright^{-1}y$};
            \node at (2, -2.5) {\small$y$};
            \node at (3, -4.5) {$(1)$};
        \end{tikzpicture}
        \hspace{.5cm}
   \begin{tikzpicture}[scale=.8]
            \braid[strands=3,braid start={(0,.7)}]%
            {\sigma_1^{-1} \sigma_2^{-1} \sigma_1^{-1}}
            \node[font=\huge] at (4.5,-.5) {\RomanNumeralCaps{3}};
            \node[font=\Huge] at (4.5,-1.5) {$\Leftrightarrow$};
            
            \node at (1, 1.2) {\small$x$}; 
            \node at (2, 1.2) {\small$y$};
            \node at (3, 1.2) {\small$z$};
            \node at (.2, -.1) {\small$x\triangleright y$}; 
            \node at (1, -2.8) {\small$z$}; 
            \node at (2, -2.8) {\small$y \triangleright z$};
            \node at (3, -3.4) {\small$(x \triangleright y) \triangleright z$};
            \draw[->] (3,-3) -- (3,-2.5);

             \braid[strands=3,braid start={(5,.7)}]
            {\sigma_2^{-1} \sigma_1^{-1} \sigma_2^{-1}}
            \node at (6, 1.2) {\small$x$}; 
            \node at (7, 1.2) {\small$y$};
            \node at (8, 1.2) {\small$z$};
            \node at (8.7, .2) {\small$y\triangleright z$}; 
            \node at (7.4, -.8) {$x \triangleright z$};
            \node at (6, -2.8) {\small$z$}; 
            \node at (7, -2.8) {\small$y\triangleright z$};
            \node at (8, -3.4) {\small$(x \triangleright z) \triangleright (y \triangleright z)$};
            \draw[->] (8,-3.0) -- (8,-2.5);
            \node at (5, -4.5) {$(2)$};
        \end{tikzpicture}
    \end{center}
        \caption{Diagrams representing the rack axioms.}
        \label{figs:RackAxioms}
\end{figure}

\begin{example} \label{trefoil}
        Consider the oriented framed trefoil knot $L$ with diagram $D$, see Figure~\ref{fig:FTrefoil}.
\begin{figure}
        \begin{center}
            \begin{tikzpicture}
        \begin{knot}[
          consider self intersections=true,
          flip crossing={3},
            flip crossing={1},
            ]
        
        \strand[<-]
          (0,2) .. controls +(2.2,0) and +(120:-2.2) .. (210:2)
        .. controls +(120:2.2) and +(60:2.2) .. (-30:2) 
        .. controls +(60:-2.2) and +(-2.2,0) .. (0,2);
        
        \end{knot}
        \node at (-2,0) {$x$};
        \node at (0,2.4) {$y$};
        \node at (2,0) {$z$};
        
        \end{tikzpicture}
        \end{center}
        \caption{A diagram $D$ of $L$.}
        \label{fig:FTrefoil}
\end{figure}
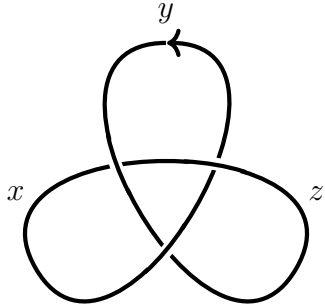
       Using the rack coloring rule in Figure~\ref{figs:ColoringRule}, we obtain the following coloring equations: $y = z \triangleright x$, $x = y \triangleright z$, and $z = x \triangleright y$. Thus,
        \begin{eqnarray*}
            \mathcal{FR}(L)&\cong&\langle x,y,z \,\vert\, y = z \triangleright x, x = y \triangleright z, z = x \triangleright y\rangle\\
            &\cong&\langle x,y \,\vert\, y = (x \triangleright y) \triangleright x, x = y \triangleright (x \triangleright y)\rangle.
        \end{eqnarray*} 
        Using the dihedral quandle $R_3$ the oriented framed link $L$ has 9 valid colorings by $R_3$, each of which we can identify as a tuple $(f(x), f(y))$ where $f \in \textup{Hom}(\mathcal{FR}(L),R_3)$. Hence,
        \[ \textup{Hom}(\mathcal{FR}(L),R_3) = \lbrace (1, 1),(2, 2),(0, 0),(1, 0),(1, 2),(2, 0),(2, 1),(0, 2),
            (0, 1) \rbrace.\]
        Therefore, the rack coloring invariant of $L$ by $R_3$ is $9$.
        
    \end{example}

\section{Braids}\label{Braids}
Having established the foundational properties of racks, we now transition to reviewing some basic facts about braids and braid groups. For more details on braids, see \cite{A1,A,M}.  Braids can be defined in several ways, but in this article, we will briefly review the definitions relevant to this article. We begin with the following definition.

Geometrically, an \emph{$n$-braid} is a collection of $n$ disjoint, properly embedded arcs in a cube, such that each arc begins on the top face, ends on the bottom face, and runs monotonically from the top of the cube to the bottom of the cube. The arcs connect $n$ distinct points on the top face to $n$ distinct points on the bottom face without intersecting. In illustrations, we represent the cube as two parallel bars indicating the top and bottom faces, see Figure~\ref{fig:Braid}.

           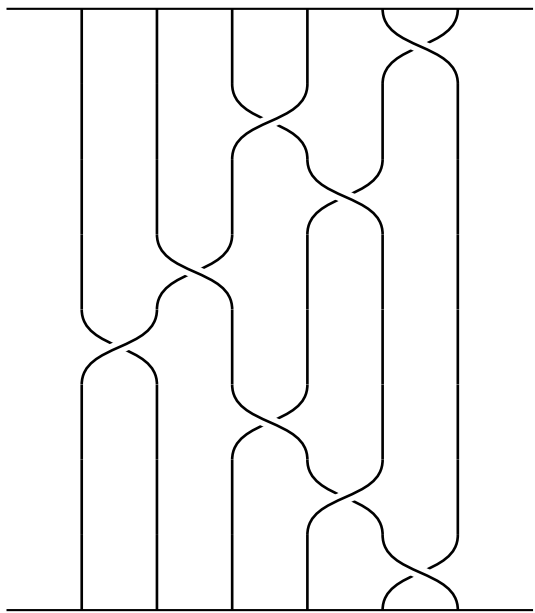
\begin{figure}[H]
               \centering
        \begin{tikzpicture}
            \braid[strands=6,braid start={(0,0)}]%
        {\sigma_5\sigma_3^{-1}\sigma_4\sigma_2\sigma_1^{-1}\sigma_3\sigma_4^{-1}\sigma_5}
        \draw[thick] (0,0) -- (7,0);    
        \draw[thick] (0,-8) -- (7,-8);
        \end{tikzpicture} 
               \caption{A six-braid.}
               \label{fig:Braid}
           \end{figure}

 Two braids are \emph{isotopic} through any isotopy move of the arcs of the braids that preserves the braid structure. Furthermore, we say that two $n$-braids are \emph{equivalent} or \emph{equal} if they are isotopic. Just like in the case of knots and links, we can obtain a regular diagram of a braid by projecting the braid onto a plane. In terms of braid diagrams, these isotopies consist of Reidemeister moves II and III with downward orientation (called braid Reidemeister moves), as well as planar isotopies that preserve monotonicity and endpoints. The planar isotopies allow for the movement of two nonadjacent crossings.  Additionally, an $n$-braid is the equivalence class of braid diagrams under the equivalence relation generated by the braid Reidemeister moves and planar isotopies. 
    
Alternatively, a braid can be defined as a finite product of \emph{generators} $\sigma_i$ and $\sigma_{i}^{-1}$, see Figure~\ref{fig:gen}. Observe that $\sigma_{i}$ represents the $i^{th}$ strand crossing over the $i + 1^{st}$ strand. Alternately, $\sigma_{i}^{-1}$ represents the $i^{th}$ strand crossing under the $i + 1^{st}$ strand. Thus, a braid can be written as a list of crossings from top to bottom, which we call a \emph{word} for the braid.

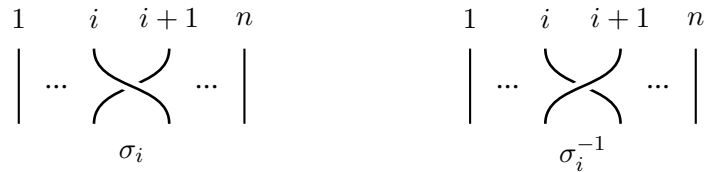
\begin{figure}[H]
     \begin{center}
    \begin{tikzpicture}
    \node at (-4, .4) {\small$1$}; 
    \draw[thick] (-4,0) -- (-4, -1);
    \node at (-3.5, -.5) {...};
    \braid[strands=2,braid start={(-4,0)}]%
    {\sigma_1}
     \node at (-1.5, -.5) {...};
     \node at (-1, .4) {\small$n$}; 
    \draw[thick] (- 1,0) -- (-1, -1);
    \node at (-3, .4) {\small$i$}; 
    \node at (-2, .4) {\small$i + 1$};
    \node at (-2.5, -1.4) {\small$\sigma_{i}$};

    \draw[thick] (2,0) -- (2, -1);
    \node at (2, .4) {\small$1$}; 
    \node at (2.5, -.5) {...};
    \braid[strands=2,braid start={(2,0)}]
    {\sigma_1^{-1}}
    \node at (4.5, -.5) {...};
    \draw[thick] (5,0) -- (5, -1);
    \node at (5, .4) {\small$n$}; 
    
    \node at (3, .4) {\small$i$}; 
    \node at (4, .4) {\small$i + 1$};
    \node at (3.5, -1.4) {\small$\sigma_{i}^{-1}$};
    \end{tikzpicture} 
     \end{center}
     \caption{The generators $\sigma_i$ and $\sigma_i^{-1}$.}
     \label{fig:gen}
     \end{figure}

 \begin{example}
In this example we have two ways of representing a 3-braid. In Figure~\ref{fig:diaword}, we have a braid diagram and the braid of a 3-braid. 

\begin{figure}
\begin{center}
           \begin{tikzpicture}
             \braid[strands=4,braid start={(0,0)}]%
             {\sigma_1^{-1}\sigma_2\sigma_1\sigma_3^{-1}\sigma_2^{-1}}
             \node[font=\Huge] at (5,-2.5) {\(=\)};
             \node at (7, -2.5) {\small$\sigma_1^{-1}\sigma_2\sigma_1\sigma_3^{-1}\sigma_2^{-1}$};
            \end{tikzpicture} 
\end{center}
  \caption{A braid diagram and braid word of a 3-braid.}
    \label{fig:diaword}
\end{figure}
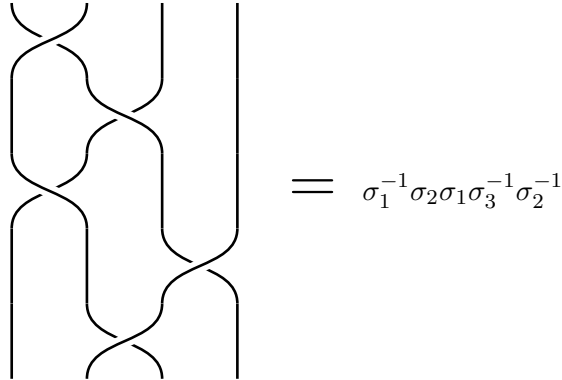
\end{example}

The following group was first introduced by Artin in 1925~\cite{A,A1}. The \emph{$n$-braid group}, denoted by $B_n$, is the set of all $n$-braids under the operation of multiplication, where the product of two $n$-braids $\alpha$ and $\beta$ is defined as the vertical juxtaposition of $\alpha$ followed by $\beta$. This operation is denoted by concatenation: $\alpha\beta$.

The braid group has two \emph{fundamental relations}:
    
    \begin{center}
        \begin{enumerate}
         \item $\sigma_{i}\sigma_{j} = \sigma_{j}\sigma_{i} \qquad\hbox{}\qquad\hbox{}\qquad\hbox{}|i -j| \geq 2$;
         \item $\sigma_{i}\sigma_{i + 1}\sigma_{i} = \sigma_{i + 1}\sigma_{i}\sigma_{i + 1} \qquad\hbox{} i = 1, 2,..., n-2.$
     \end{enumerate} 
    \end{center}

The fundamental relation $(2)$ encodes the Reidemeister III move with the downward orientation, while the fundamental condition $(1)$ encodes the planar isotopy discussed earlier of moving two non-adjacent crossings. Lastly, the Reidemeister II move is automatically captured by the fact that each $\sigma_i$ is invertible. 
    
Since the rack conditions axiomatize the Reidemeister moves II and III, it is tempting to use racks to define a rack counting invariant of braids, but we must be cautious as noted in \cite{CN2, R}. Let $X$ be a finite rack and let $B$ be the diagram of an oriented $n$-braid. At any crossing in $B$, the input colors at a crossing uniquely determine the output color, as shown in Figure~\ref{fig:xcolors}.
\begin{figure}[H]
    \centering
    \begin{center}
\begin{tikzpicture}
\braid[strands=2,braid start={(-5,0)}]
{\sigma_1^{-1}}
\braid[strands=2,braid start={(-1,0)}]
{\sigma_1}

\node at (-4, .4) {\small$x$}; 
\node at (-3, .4) {\small$y$};
\node at (-4, -1.4) {\small$y$}; 
\node at (-3, -1.4) {\small$x\triangleright y$}; 

\node at (0, .4) {\small$y$}; 
\node at (1, .4) {\small$x$};
\node at (0, -1.4) {\small$x\triangleright^{-1} y$}; 
\node at (1, -1.4) {\small$y$}; 
\end{tikzpicture} 

\end{center}
    \caption{Colorings at positive and negative crossings.}
    \label{fig:xcolors}
\end{figure}
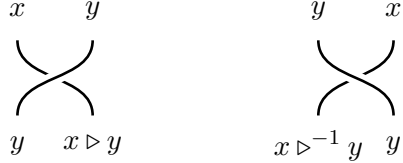
Since we are not considering the closure of the braid there are no additionally restrictions. This means that each selection of colors $x_1, x_2, \dots, x_n \in X$ for the top of $B$ will uniquely determine the colorings of the arcs in the diagram $B$. Thus, the rack counting invariant of an oriented $n$-braid with respect to $X$ is $\vert X \vert^n$. Furthermore, if we consider an $n$-braid $B$ without performing a closure, then its fundamental rack, which we will denote by $\mathcal{R}(B)$, is the free rack on generators $x_1, \dots, x_n$, corresponding to the colors of the top arcs of $B$. Specifically, $\mathcal{R}(B) = \langle x_1, \dots, x_n \rangle$. By the definition of a rack, $\mathcal{R}(B)$ is an invariant of $n$-braids, although a trivial one.

In the following section, we will address this issue and introduce a more effective invariant of braids.

\section{Introduction of pointed racks}\label{Pointed Rack}
In this section, we introduce racks with basepoints. This will allow us to remember the colors at the top and bottom of a braid diagram. We will then use this algebraic structure to define effective invariants of braids. 

\begin{definition}\label{pointedrackdef}
An \emph{$m$-pointed rack} $(X, x_1, \dots, x_m)$ is an ordered $m+1$-tuple where $X$ is a rack and $x_1, x_2, \dots, x_m \in X$. We will call the $x_1, \dots, x_m $ the \emph{basepoints} of the pointed $m$-rack. 
\end{definition}
When $X$ is restricted to a quandle, we obtain a pointed quandle as introduced in \cite{GuPf}. To simplify our notation, we will denote an $m$-pointed rack $(X, x_1, \dots, x_m)$ with the calligraphy letter $\mathcal{X}$.

\begin{definition}
    An \emph{$m$-pointed rack homomorphism} between two $m$-pointed racks $\mathcal{X}=(X,x_1, \dots, x_m)$ and $\mathcal{Y}=(Y,y_1, \dots y_m)$ is a map $\phi: X\rightarrow Y$ that is a rack homomorphism such that $\phi(x_i)= y_i$ for all $i \in \lbrace 1,\dots, m \rbrace$. The set of all homomorphism between $\mathcal{X}$ and $\mathcal{Y}$ will be denoted by $\textup{Hom}(\mathcal{X}, \mathcal{Y})$. In the case where $\phi$ is an $m$-pointed homomorphism that is a bijection, then $\phi$ is an \emph{$m$-pointed rack isomorphism}.
\end{definition}

\begin{proposition}
    Let $X$ and $Y$ be racks and let $\mathcal{X}=(X, x_1, \dots, x_m)$ be an $m$-pointed rack. Then $\phi$ is a rack isomorphism between $X$ and $Y$ if and only if the induced map 
    \[\phi: \mathcal{X} \to \phi(\mathcal{X}), \quad \text{where } \phi(\mathcal{X}) = (Y, \phi(x_1), \dots, \phi(x_m)),
\]
is an isomorphism of pointed racks.
\end{proposition}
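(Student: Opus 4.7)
The plan is to prove this by unpacking the definition of an $m$-pointed rack homomorphism and checking the two implications directly. The content is essentially definitional, so the proof reduces to verifying that the extra data of basepoints imposes no additional constraint once we define $\phi(\mathcal{X})$ to carry the basepoints $\phi(x_1),\dots,\phi(x_m)$.

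For the forward direction, I would assume that $\phi\colon X \to Y$ is a rack isomorphism and consider the map $\phi\colon \mathcal{X} \to \phi(\mathcal{X})$ on the underlying sets. Since $\phi$ is a rack homomorphism, the identity $\phi(a \triangleright b) = \phi(a) \ast \phi(b)$ still holds. By the construction of $\phi(\mathcal{X}) = (Y, \phi(x_1), \dots, \phi(x_m))$, each basepoint $x_i$ of $\mathcal{X}$ is sent to the corresponding basepoint $\phi(x_i)$ of $\phi(\mathcal{X})$. Thus $\phi$ satisfies the $m$-pointed rack homomorphism condition, and since it is bijective by assumption, it is an $m$-pointed rack isomorphism.

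For the converse direction, I would suppose that the induced map $\phi\colon \mathcal{X} \to \phi(\mathcal{X})$ is an $m$-pointed rack isomorphism. By definition, this means $\phi$ is a bijection $X \to Y$ satisfying $\phi(a \triangleright b) = \phi(a) \ast \phi(b)$ for all $a, b \in X$ (together with the basepoint-preservation condition, which we do not need in this direction). These are precisely the axioms for $\phi$ to be a rack isomorphism between $X$ and $Y$.

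There is no serious obstacle; the slight subtlety is to notice that the assignment of basepoints in $\phi(\mathcal{X})$ is tailored precisely so the basepoint-compatibility condition is automatic in both directions, which is why the proposition states an equivalence rather than a one-way implication. In exposition I would emphasize that this proposition is what justifies treating $\phi(\mathcal{X})$ as the canonical image of $\mathcal{X}$ under $\phi$ in the category of pointed racks, so no information is lost when passing between rack isomorphisms and pointed rack isomorphisms via this construction.
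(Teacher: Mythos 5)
Your proof is correct and follows essentially the same route as the paper's: both directions are handled by unpacking the definitions, observing that the basepoints of $\phi(\mathcal{X})$ are defined precisely as the images $\phi(x_i)$ so that basepoint-preservation is automatic. Your version is, if anything, slightly more explicit about why the operation-preservation and bijectivity conditions transfer, which is a welcome clarification of what is otherwise a purely definitional check.
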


\begin{proof}

Suppose $\phi: X \to Y$ is a rack isomorphism. By definition, $\phi$ is a bijection, so $\phi(X)=Y$. The map $\phi$ also sends each basepoint $x_i$ to $\phi(x_i)$, so $\phi : \mathcal{X} \to \phi(\mathcal{X})$ defined by $\phi(\mathcal{X}) := (Y, \phi(x_1), \dots, \phi(x_m))$ is a pointed rack isomorphism.

Conversely, suppose $\phi : \mathcal{X} \to \phi(\mathcal{X})$ is a pointed rack isomorphism, then  $\phi : X \to Y$ is a rack isomorphism and maps each $x_i$ to $\phi(x_i)$. Therefore, $\varphi$ is a rack isomorphism.
\end{proof}

\begin{definition}
Let $B$ be an $n$-braid. The \emph{fundamental pointed rack of $B$} is the $2n$-pointed rack  
\[
\mathcal{PR}(B) := (\mathcal{R}(B), t_1, \dots, t_n, b_1, \dots, b_n),
\]  
where $\mathcal{R}(B)$ is the fundamental rack of $B$, generated by the labels $t_1, \dots, t_n$ assigned to the top of the arcs of the braid. The labels $b_1, \dots, b_n \in \mathcal{R}(B)$ are the corresponding labels assigned to the bottom of the arcs of the braid, determined by the braid relations.

\end{definition}

As in the case of the fundamental rack of an oriented framed link, we can obtain a presentation of the fundamental pointed rack of $B$ by labeling the arcs of a diagram $D$ of the braid $B$. As mentioned earlier, the labels assigned to the top arcs of $B$ will uniquely determine the remaining labels for the arcs of the diagram of $D$. Thus, the top labels are the generators of the fundamental rack of $B$. Furthermore,  if the labels used for the top arcs of $D$ are $t_1,\dots, t_n$ then the bottom labels $b_1, \dots, b_n$ will be rack words determined by $t_1, \dots, t_n$, the braid, and the rack coloring rule.

\begin{theorem}\label{fundamental}
    The fundamental pointed rack is an $n$-braid invariant. 
\end{theorem}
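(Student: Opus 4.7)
The plan is to verify that $\mathcal{PR}(B)$ is preserved, up to pointed rack isomorphism, under each generator of braid equivalence: braid Reidemeister II, braid Reidemeister III, and the planar isotopy that commutes two nonadjacent crossings. Since the underlying rack $\mathcal{R}(B) = \langle t_1, \dots, t_n \rangle$ is by construction the free rack on the $n$ top arcs, its isomorphism type does not depend on the diagram representative at all; only the bottom words $b_1, \dots, b_n$, viewed as rack words in $t_1, \dots, t_n$, could in principle vary with the diagram. The claim therefore reduces to showing that performing any of the generating moves in the interior of a diagram leaves these bottom words unchanged as elements of the free rack on $t_1,\dots,t_n$.

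First, I would fix top labels $t_1, \dots, t_n$ and use the rule in Figure~\ref{figs:ColoringRule} to propagate labels downward through the diagram, producing specific rack words $b_1,\dots,b_n$. For a braid Reidemeister II move, a segment $\sigma_i^{\pm 1} \sigma_i^{\mp 1}$ is inserted into, or deleted from, the diagram; if the input labels to this segment are $(x, y)$, then the invertibility axiom (rack axiom 1) gives $(x \triangleright y) \triangleright^{-1} y = x$ and $(x \triangleright^{-1} y) \triangleright y = x$, so the output labels are again $(x, y)$ and nothing downstream changes. For a braid Reidemeister III move, the subword $\sigma_i \sigma_{i+1} \sigma_i$ is replaced by $\sigma_{i+1} \sigma_i \sigma_{i+1}$; with input labels $(x, y, z)$, the two sides yield outputs $(z, y\triangleright z, (x\triangleright y)\triangleright z)$ and $(z, y\triangleright z, (x\triangleright z)\triangleright (y\triangleright z))$ respectively, which agree by self-distributivity (rack axiom 2), as depicted in Figure~\ref{figs:RackAxioms}. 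The corresponding verification for mixed signs is identical in spirit and uses the same two axioms in combination. Finally, for a planar isotopy commuting $\sigma_i^{\pm 1}$ with $\sigma_j^{\pm 1}$ when $|i - j| \geq 2$, the two crossings act on disjoint pairs of strands, so their order manifestly does not affect the propagated labels.

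From these local checks, the bottom rack words $b_1, \dots, b_n$ are the same elements of the free rack on $t_1,\dots,t_n$ for any two diagrams of $B$ that differ by a single generating move, and hence—by induction on the number of moves in a braid equivalence—for any two diagrams of $B$ whatsoever. The identity map on $\{t_1,\dots,t_n\}$ therefore extends to a pointed rack isomorphism between the fundamental pointed racks associated to any two diagrams of $B$, which proves the theorem. The only real obstacle is the careful local verification of the three move types, but the rack axioms were designed precisely so that this verification is immediate.
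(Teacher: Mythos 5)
Your proposal is correct and follows essentially the same route as the paper: both arguments observe that the top generators are untouched by the moves and that the rack axioms guarantee the bottom labels are unchanged as elements of the free rack, so the identity on generators gives a pointed rack isomorphism. Your version simply spells out the local verifications (Reidemeister II via invertibility, Reidemeister III via self-distributivity, far commutation via disjointness) that the paper's proof leaves implicit.
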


\begin{proof}
Let $B$ be an $n$-braid with diagram $D$. The fundamental rack $ \mathcal{R}(B) $ is the free rack on generators $ t_1, \dots, t_n$. Since we are considering braids and not the closure of the braid, the labels $t_1, \dots, t_n$ of the top arcs of $D$ are generators for a presentation of the fundamental rack of $B$. Furthermore, the labels on all other arcs, including the bottom strands $ b_1, \dots, b_n $, are determined uniquely by $t_1, \dots t_n$, the braid, and the rack coloring rule.

The Reidemeister moves are only applied to a braid diagram away from the endpoints, so the top generators $ t_i $ are unchanged, and each bottom label $ b_i \in \mathcal{R}(B)$ is the image of $ t_i \in \mathcal{R}(B) $ under the braid. This image is determined by the same word in the free rack, up to equivalence under the rack axioms, which correspond to the Reidemeister moves. So the rack isomorphism between fundamental racks before and after the Redeimeister move maps the endpoint labels to endpoint labels, hence it is pointed rack isomorphism. It follows that the fundamental point rack is a braid invariant.
\end{proof}

\begin{figure}[H]
    \centering
    \begin{center}
\begin{tikzpicture}
\braid[strands=2,braid start={(-5,-1)}]
{\sigma_1^{-1}\sigma_1^{-1}}
\braid[strands=2,braid start={(-1,0)}]
{\sigma_1^{-1}\sigma_1^{-1}\sigma_1^{-1}}

\node at (-3.5, 0) {\small$B_1$}; 
\node at (.5, 1) {\small$B_2$};

\node at (-4, -.6) {\small$x$}; 
\node at (-3, -.6) {\small$y$};
\node at (-4, -3.4) {\small$x \triangleright y$}; 
\node at (- 2.5, -3.4) {\small$ y \triangleright (x\triangleright y)$}; 

\node at (0, .4) {\small$x$}; 
\node at (1, .4) {\small$y$};
\node at (0, -3.4) {\small$ y \triangleright (x\triangleright y)$}; 
\node at (3, -3.8) {\small$(x \triangleright y) \triangleright (y \triangleright (x\triangleright y)) $}; 
\draw[->] (3,-3.5) -- (1.2,-3.1);
\end{tikzpicture} 

\end{center}
    \caption{Braids $B_1$ and $B_2$}
    \label{fig:FundEx}
\end{figure}
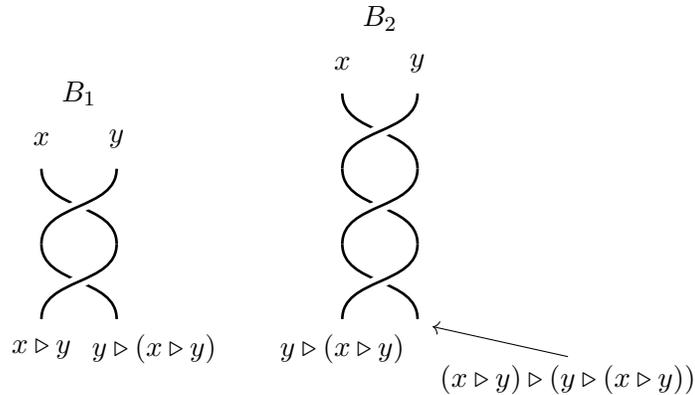

\begin{example}
    Consider the diagrams of the $2$-braids $B_1$ and $B_2$ in Figure~\ref{fig:FundEx}. Their fundamental racks are: 
    \[\mathcal{R}(B_1)  \cong \langle x, y \rangle \cong \mathcal{R}(B_2).\]
    Their fundamental pointed racks are:
    \[ \mathcal{PR}(B_1)  \cong (\mathcal{R}(B_1), x, y, x \triangleright y, y \triangleright (x\triangleright y) )\] and 
    \[ \mathcal{PR}(B_2)  \cong (\mathcal{R}(B_2), x, y, y \triangleright (x\triangleright y), (x \triangleright y) \triangleright (y \triangleright (x\triangleright y)) ).\]
Let $X = R_3$ be the dihedral quandle of order 3 and $\mathcal{X} = (X, 0,1,2,0)$. We can construct a pointed homomorphism $\phi: \mathcal{PR}(B_1) \to \mathcal{X}$ by defining $\phi$ on the generators:
\[ \phi(x) = 0, \quad \phi(y) =1.\]
Next we compute the following: 
\begin{eqnarray}
  \phi(x\triangleright y)&=& \phi(x) \triangleright \phi(y) = 0 \triangleright 1 = 2,  \\
  \phi(y \triangleright ( x \triangleright y)) &=& \phi(y) \triangleright ( \phi(x) \triangleright \phi(y)))=1 \triangleright ( 0 \triangleright 1) =  0.
\end{eqnarray}
So the map $\phi$ sends the basepoints of $\mathcal{PR}(B_1)$ to the basepoints of $\mathcal{X}$. Therefore, $\phi$ is a pointed rack homomorphism.

On the other hand, let $\phi: \mathcal{PR}(B_2) \to \mathcal{X}$ be defined on the generators by 
\[ \phi(x) =0, \quad \phi(y)=1. \] 
Then, 
\begin{eqnarray}
\phi(y \triangleright ( x \triangleright y)) &=& \phi(y) \triangleright ( \phi(x) \triangleright \phi(y)))=1 \triangleright ( 0 \triangleright 1) = 0,\\
\phi((x \triangleright y) \triangleright (y \triangleright ( x \triangleright y))) &=& (\phi(x) \triangleright \phi(y)) \triangleright (\phi(y) \triangleright ( \phi(x) \triangleright \phi(y)))) = 1.
\end{eqnarray}
The image of the basepoints of $\mathcal{PR}(B_2)$ is $(0,1,0,1)$, which are not the basepoints of $\mathcal{X}$. Thus, there does not exist a pointed homomorphism $\phi: \mathcal{PR}(B_2) \rightarrow \mathcal{X}$. 

We see that although the fundamental racks of these two braids are isomorphic, their pointed fundamental racks are not isomorphic. Therefore, the pointed fundamental rack can be used to define effective invariants of braids. 
\end{example}

\section{The Pointed Rack Counting Invariant} \label{Counting Invariant}

In this section, we introduce an integer-value invariant of braids and a matrix-value invariant. 
For any $n$-braid $B$, there is an associated fundamental pointed rack, $\mathcal{PR}(B)$. Let $(X, x_1, x_2, \dots, x_{2n})$ be a finite pointed rack. The set of pointed rack homomorphisms from the fundamental pointed rack of $B$ and the finite pointed rack, $\textup{Hom}(\mathcal{PR}(B), X)$, can be used to define the following integer-invariant. 

\begin{definition} 
    Let $B$ be an $n$-braid and $\mathcal{X}=(X, x_1,\dots, x_{2n})$ be a $2n$-pointed rack. The \emph{pointed rack counting invariant of $B$} is
    \[ Col_\mathcal{X}(B) = \vert \textup{Hom}(\mathcal{PR}(B), \mathcal{X}) \vert.  \]
\end{definition}

\begin{theorem}\label{inv}
    The pointed rack counting invariant is an $n$-braid invariant.
\end{theorem}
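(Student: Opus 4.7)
The plan is to reduce the statement directly to Theorem~\ref{fundamental}, which already does the topological work by identifying the fundamental pointed rack as a braid invariant. Once we have $\mathcal{PR}(B)\cong \mathcal{PR}(B')$ as pointed racks for equivalent braids $B\sim B'$, the cardinality of $\textup{Hom}(\mathcal{PR}(B),\mathcal{X})$ will follow from the functorial observation that an isomorphism in the source of a $\textup{Hom}$ functor induces a bijection of $\textup{Hom}$-sets by precomposition.

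First I would fix two equivalent $n$-braids $B$ and $B'$ and invoke Theorem~\ref{fundamental} to obtain a pointed rack isomorphism $\psi:\mathcal{PR}(B)\to \mathcal{PR}(B')$. By the definition of a pointed rack isomorphism, $\psi$ is a rack isomorphism carrying the ordered tuple of basepoints $(t_1,\dots,t_n,b_1,\dots,b_n)$ of $\mathcal{PR}(B)$ to the corresponding ordered tuple of basepoints of $\mathcal{PR}(B')$. Next I would define
\[
\Psi:\textup{Hom}(\mathcal{PR}(B'),\mathcal{X}) \longrightarrow \textup{Hom}(\mathcal{PR}(B),\mathcal{X}), \qquad \Psi(f) = f\circ\psi,
\]
and verify that $\Psi$ is well-defined: for any pointed rack homomorphism $f:\mathcal{PR}(B')\to \mathcal{X}$, the composition $f\circ\psi$ is a rack homomorphism (composition of rack homomorphisms) and sends the $i$-th basepoint of $\mathcal{PR}(B)$ to the $i$-th basepoint of $\mathcal{X}$, because $\psi$ matches basepoints in order and then $f$ matches them to $x_i$.

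To show $\Psi$ is a bijection, I would exhibit its two-sided inverse $\Psi^{-1}(g) = g\circ\psi^{-1}$. The map $\psi^{-1}$ is again a pointed rack isomorphism, so the same argument shows that $g\circ\psi^{-1}$ lies in $\textup{Hom}(\mathcal{PR}(B'),\mathcal{X})$; the identities $\Psi\circ\Psi^{-1}=\mathrm{id}$ and $\Psi^{-1}\circ\Psi=\mathrm{id}$ follow from $\psi\circ\psi^{-1}=\mathrm{id}$ and associativity of composition. Consequently
\[
\textup{Col}_{\mathcal{X}}(B) = |\textup{Hom}(\mathcal{PR}(B),\mathcal{X})| = |\textup{Hom}(\mathcal{PR}(B'),\mathcal{X})| = \textup{Col}_{\mathcal{X}}(B'),
\]
which is the desired invariance.

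The main obstacle, such as it is, is not topological but a bookkeeping check: one must be careful that the ordering of the $2n$ basepoints is preserved by $\psi$, since a rack isomorphism that permutes basepoints would not define a pointed rack isomorphism and would break well-definedness of $\Psi$. This order-preservation is exactly what Theorem~\ref{fundamental} supplies, since the Reidemeister moves that generate braid equivalence act away from the endpoints and therefore fix the labels $t_i$ and $b_i$ as the $i$-th top and bottom strand labels, respectively. Beyond this point the proof is purely formal.
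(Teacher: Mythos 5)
Your proposal is correct and follows the same route as the paper, which simply cites Theorem~\ref{fundamental}; you have just made explicit the standard argument (precomposition with the pointed rack isomorphism gives a bijection of $\textup{Hom}$-sets) that the paper leaves implicit. No issues.
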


\begin{proof}
    This follows from Theorem~\ref{fundamental}.
\end{proof}
For the readers who are familiar with the quandle counting invariant might find it interesting that the pointed rack counting invariant may have a value of 0. Additionally, we prove that this invariant only takes values of 0 or 1. 
\begin{proposition}
Let $B$ be an $n$-braid and let $\mathcal{X}$ be a $2n$-pointed rack. The pointed rack counting invariant of $B$ satisfies
\[
\textup{Col}_{\mathcal{X}}(B) \in \{0, 1\}.
\]
\end{proposition}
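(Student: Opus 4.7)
The plan is to exploit the fact, recorded in Section~\ref{Braids}, that the fundamental rack of an $n$-braid $B$ is the \emph{free} rack on the top labels: $\mathcal{R}(B) = \langle t_1, \dots, t_n \rangle$. Freeness is what makes the counting trivialize, because homomorphisms out of a free rack are determined by their values on the generators, with no compatibility condition to verify on the generators themselves.

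First I would unpack the definition of a pointed rack homomorphism $\phi \in \textup{Hom}(\mathcal{PR}(B), \mathcal{X})$. Such a $\phi$ is, in particular, a rack homomorphism $\mathcal{R}(B) \to X$, and it must satisfy the basepoint conditions $\phi(t_i) = x_i$ for $i=1,\dots,n$ and $\phi(b_i) = x_{n+i}$ for $i=1,\dots,n$. Since $\mathcal{R}(B)$ is free on $t_1,\dots,t_n$, the universal property of the free rack says that the assignments $t_i \mapsto x_i$ extend to a \emph{unique} rack homomorphism $\phi: \mathcal{R}(B) \to X$. Thus at most one candidate pointed homomorphism exists, giving $\textup{Col}_{\mathcal{X}}(B) \le 1$.

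Next I would address when this candidate $\phi$ actually is a pointed rack homomorphism. The first $n$ basepoint conditions hold by construction. The remaining conditions require $\phi(b_i) = x_{n+i}$ for $i=1,\dots,n$. Because each $b_i$ is a specific rack word in $t_1, \dots, t_n$ (determined by the braid word and the coloring rule of Figure~\ref{figs:ColoringRule}), the values $\phi(b_i)$ are forced once $\phi(t_1),\dots,\phi(t_n)$ are chosen. Either the forced values coincide with $x_{n+1},\dots,x_{2n}$, in which case $\phi$ is a valid pointed homomorphism and $\textup{Col}_{\mathcal{X}}(B) = 1$, or at least one of them fails to match, in which case no pointed homomorphism exists and $\textup{Col}_{\mathcal{X}}(B) = 0$.

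There is no real obstacle here; the whole argument hinges on identifying $\mathcal{R}(B)$ as a free rack and applying its universal property. The only point that needs a careful sentence is the justification that the bottom labels $b_i$ are indeed specific elements of the free rack determined uniquely by the generators $t_1,\dots,t_n$, which is already noted in the discussion preceding Theorem~\ref{fundamental}. Combining these observations yields $\textup{Col}_{\mathcal{X}}(B) \in \{0,1\}$.
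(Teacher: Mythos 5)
Your proposal is correct and follows essentially the same route as the paper's own proof: the homomorphism is pinned down on the free generators $t_1,\dots,t_n$ by the basepoint condition, the universal property of the free rack yields at most one candidate, and the only remaining question is whether the forced values $\phi(b_i)$ agree with the bottom basepoints. If anything, your explicit appeal to freeness and to the fact that $\phi(t_i)=x_i$ is itself forced makes the uniqueness step slightly cleaner than the paper's wording, but the argument is the same.
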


\begin{proof}
    Let $B$ be an $n$-braid and let $\mathcal{X}$ be a $2n$-pointed rack. The fundamental pointed rack, $\mathcal{PR}(B)$ is generated by $n$ generators, $t_1,t_2,...,t_n$.  Recall that the bottom arc labels $b_1, b_2,...b_n$ are uniquely determined by the top arc labels $t_1,t_2,...,t_n$, the braid relations, and the rack coloring rule. Thus, for each set of $t_1,t_2,...,t_n$, there is only one set of $b_1,b_2,...,b_n$. A pointed rack homomorphism $\phi \colon \mathcal{PR}(B) \rightarrow \mathcal{X}$ maps the generators to their images in $\mathcal{X}$, preserving the structure of the rack and the basepoints. Because $b_i$ are uniquely determined by $t_i$, the image of $\mathcal{PR}(B)$ under any such homomorphism is uniquely determined by the images of $t_i$.
 Thus, it follows that if the images $\phi(t_i)$ yield $\phi(b_i)$ equal to the corresponding basepoints in $\mathcal{X}$, then this is the only valid rack coloring and $\operatorname{Col}_{\mathcal{X}}(B) = 1$. On the other hand, if this condition fails for any $\phi(b_i)$ then there is no coloring and  $Col_\mathcal{X}(B) = 0.$
\end{proof}

        \begin{example}
        Let $X = R_3$ be the dihedral quandle of order 3 and consider the pointed quandle $\mathcal{X}=(X, 0, 1, 2, 2, 1, 2)$.
        Let $B_1 = \sigma_2^{-1}\sigma_2^{-1}\sigma_2^{-1}\sigma_2^{-1}\sigma_2^{-1}\sigma_1^{-1}\sigma_2\sigma_1^{-1}$ and $B_2 = \sigma_2\sigma_2\sigma_2\sigma_2\sigma_1\sigma_2^{-1}\sigma_1\sigma_1$ be two 3-braids, see Figure~\ref{fig:DiagramsB1B2}.
        There exists a pointed rack homomorphism $\phi : \mathcal{PR}(B_1) \rightarrow \mathcal{X}$ defined by $\phi(t_1)=0, \phi(t_2)=1$, and $\phi(t_3)=2$. On the other hand, there does not exist a pointed homomorphism from $\mathcal{PR}(B_2)$ to $\mathcal{X}$. Thus, $Col_\mathcal{X}(B_1) = 1$ and $Col_\mathcal{X}(B_2) = 0.$  Therefore, since 
        $Col_\mathcal{X}(B_1) \not= Col_\mathcal{X}(B_2)$ the pointed counting invariant distinguishes these two braids. 

        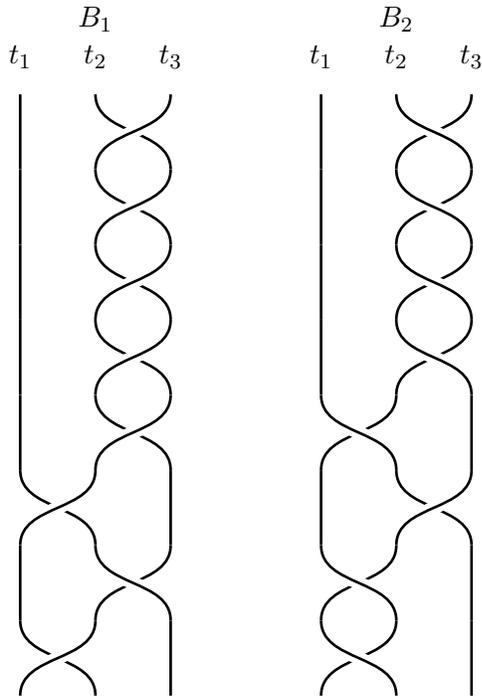
\begin{figure}[H]
        \begin{center}
            \begin{tikzpicture}
        \braid[strands=3,braid start={(-5,0)}]{\sigma_2^{-1}\sigma_2^{-1}\sigma_2^{-1}\sigma_2^{-1}\sigma_2^{-1}\sigma_1^{-1}\sigma_2\sigma_1^{-1}}
        \braid[strands=3,braid start={(-1,0)}]
        {\sigma_2\sigma_2\sigma_2\sigma_2\sigma_1\sigma_2^{-1}\sigma_1\sigma_1}

        \node at (-3, 1) {\small$B_1$}; 
        \node at (1, 1) {\small$B_2$};
        \node at (-4,0.5) {\small$t_1$};
        \node at (-3,0.5) {\small$t_2$};
        \node at (-2,0.5) {\small$t_3$};
        \node at (0,0.5) {\small$t_1$};
        \node at (1,0.5) {\small$t_2$};
        \node at (2,0.5) {\small$t_3$};
        \end{tikzpicture}
        \end{center}
\caption{Braid diagrams of $B_1$ and $B_2$.}
\label{fig:DiagramsB1B2}
    \end{figure}
        \end{example}

\begin{definition}
Let $B$ be an $n$-braid and let $X$ be a finite rack with $\vert X \vert = m$. The \emph{rack counting matrix} of $B$ with respect to $X$, denoted $\Phi_X^{M_{m^n}}(B)$, is the $m^n \times m^n$ matrix whose rows and columns are indexed by $n$-tuples in $X^n$. The $(i,j)$-entry of the matrix is defined by
\[
\left( \Phi_X^{M_{m^n}}(B) \right)_{i,j} =
\begin{cases}
1 & \text{ if the colors of the top arcs indexed by } i \text{ extends to bottom colors indexed } j, \\
0 & \text{otherwise}.
\end{cases}
\]

\end{definition}

\begin{remark} 
By construction, each row index $i \in X^n$ corresponds to an assignment of labels to the top arcs of $B$. Each column index $j \in X^n$ corresponds to a possible assignment of labels to the bottom arcs of $B$.
\end{remark}

\begin{theorem}
    Let $X$ be a finite rack, and $B$ an $n$-braid. The rack counting matrix of $B$ is an invariant of $B$.
\end{theorem}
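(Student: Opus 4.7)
The plan is to deduce the theorem from the invariance of the fundamental pointed rack, Theorem~\ref{fundamental}, by viewing the counting matrix as the matrix of a single function $X^n\to X^n$ that is intrinsically attached to $B$.

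First, I would reformulate the matrix as the matrix of a function. At every crossing of a braid diagram, the rack coloring rule determines the two output labels uniquely from the two input labels, so any assignment $i=(x_1,\dots,x_n)\in X^n$ of labels to the top $n$ arcs of a diagram $D$ of $B$ extends uniquely to a labelling of every arc, and in particular yields a unique bottom label $\beta_{B,D}(i)\in X^n$. Consequently each row $i$ of $\Phi_X^{M_{m^n}}(B)$ contains exactly one $1$, in column $\beta_{B,D}(i)$, so the matrix is nothing but the matrix of the function $\beta_{B,D}\colon X^n\to X^n$. Showing that $\Phi_X^{M_{m^n}}(B)$ is a braid invariant therefore reduces to showing that $\beta_{B,D}$ depends only on $B$, not on $D$.

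Next, I would express $\beta_{B,D}$ in terms of the fundamental pointed rack. In the presentation of $\mathcal{PR}(B)$ coming from $D$, the top labels $t_1,\dots,t_n$ freely generate $\mathcal{R}(B)$, and each bottom basepoint $b_k$ is a specific rack word in the $t_j$'s determined by the crossings of $D$. A top coloring $i$ is precisely a rack homomorphism $\phi\colon \mathcal{R}(B)\to X$ with $\phi(t_k)=x_k$, and then $\beta_{B,D}(i)_k=\phi(b_k)$. Hence $\beta_{B,D}$ is completely determined by the ordered tuple of basepoints $(t_1,\dots,t_n,b_1,\dots,b_n)$ of $\mathcal{PR}(B)$, i.e.\ by the pointed rack itself.

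Finally, if $D$ and $D'$ are two diagrams of the same $n$-braid $B$, Theorem~\ref{fundamental} supplies a pointed rack isomorphism between the two presentations that sends the $k$-th top basepoint of one to the $k$-th top basepoint of the other, and similarly for bottom basepoints; this matching of indices is forced because the ambient isotopies underlying braid equivalence fix the $n$ top and $n$ bottom endpoints pointwise and preserve their left-to-right ordering. Precomposition with this isomorphism bijects the rack homomorphisms from the two presentations to $X$ while preserving the top and bottom labels, so $\beta_{B,D}=\beta_{B,D'}$, and the matrices coincide. The main (minor) obstacle to write up carefully is the bookkeeping in this last step, namely checking that the isomorphism of Theorem~\ref{fundamental} really pairs up basepoints with matching indices; this is exactly where it is essential that we used \emph{ordered} tuples of basepoints in Definition~\ref{pointedrackdef}, and that braid isotopies cannot permute the endpoints of the cube.
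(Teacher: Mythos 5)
Your proposal is correct and follows essentially the same route as the paper, which simply cites the invariance of the pointed rack counting invariant (itself resting on Theorem~\ref{fundamental}); you have just made explicit the bookkeeping — that each row of the matrix contains exactly one $1$, that the matrix encodes the function $i \mapsto (\phi(b_1),\dots,\phi(b_n))$ determined by the basepoints of $\mathcal{PR}(B)$, and that the pointed rack isomorphism between presentations matches basepoints index by index — which the paper leaves implicit in its one-line proof. Your expanded version is a faithful and more complete rendering of the same argument.
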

\begin{proof}
    This follows directly from Theorem~\ref{inv}.
\end{proof}
\begin{example}\label{BraidEx}
    Consider the two braids in Figure \ref{fig:FundEx}. Let $X = R_3$ be the dihedral quandle of order 3. 
    Then $$\Phi_X^{M_{3^2}}(B_1)=
    \begin{bmatrix}
    1 & 0 & 0 & 0 & 0 & 0 & 0 & 0 & 0\\
    0 & 0 & 0 & 0 & 0 & 0 & 1 & 0 & 0\\
    0 & 0 & 0 & 1 & 0 & 0 & 0 & 0 & 0\\
    0 & 0 & 0 & 0 & 0 & 0 & 0 & 1 & 0\\
    0 & 0 & 0 & 0 & 1 & 0 & 0 & 0 & 0\\
    0 & 1 & 0 & 0 & 0 & 0 & 0 & 0 & 0\\
    0 & 0 & 0 & 0 & 0 & 1 & 0 & 0 & 0\\
    0 & 0 & 1 & 0 & 0 & 0 & 0 & 0 & 0\\
    0 & 0 & 0 & 0 & 0 & 0 & 0 & 0 & 1
    \end{bmatrix}$$ and 
    $$\Phi_X^{M_{3^2}}(B_2)=
    \begin{bmatrix}
    1 & 0 & 0 & 0 & 0 & 0 & 0 & 0 & 0\\
    0 & 1 & 0 & 0 & 0 & 0 & 0 & 0 & 0\\
    0 & 0 & 1 & 0 & 0 & 0 & 0 & 0 & 0\\
    0 & 0 & 0 & 1 & 0 & 0 & 0 & 0 & 0\\
    0 & 0 & 0 & 0 & 1 & 0 & 0 & 0 & 0\\
    0 & 0 & 0 & 0 & 0 & 1 & 0 & 0 & 0\\
    0 & 0 & 0 & 0 & 0 & 0 & 1 & 0 & 0\\
    0 & 0 & 0 & 0 & 0 & 0 & 0 & 1 & 0\\
    0 & 0 & 0 & 0 & 0 & 0 & 0 & 0 & 1
    \end{bmatrix}.$$
    Observe that $\Phi_X^{M_{3^2}}(B_1) \not= \Phi_X^{M_{3^2}}(B_2)$. Therefore, the $B_1$ and $B_2$ are distinguished by the rack counting matrix invariant.
    
\end{example}

\begin{proposition}
Let $B$ be an $n$-braid and let $X$ be a finite rack. Let $\widehat{B}$ denote the closure of $B$, then the rack counting invariant of $\widehat{B}$ with respect to $X$ is given by 
\[
\textup{Col}_X(\widehat{B})=\vert \textup{Hom}(\mathcal{FR}(\widehat{B}), X) \vert = \operatorname{tr}\left( \Phi_X^{M_{m^n}}(B) \right).
\]
\end{proposition}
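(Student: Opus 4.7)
The plan is to unpack the definition of the closure $\widehat{B}$ and show that a valid $X$-coloring of $\widehat{B}$ is exactly a coloring of $B$ whose induced bottom labels agree with the chosen top labels; this will match precisely the diagonal condition on the matrix $\Phi_X^{M_{m^n}}(B)$, so the trace counts such colorings.

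First I would recall that the closure $\widehat{B}$ is obtained by joining, for each $k\in\{1,\dots,n\}$, the $k$-th top endpoint of $B$ to the $k$-th bottom endpoint by a simple arc that introduces no new crossings. Consequently, the arcs of $\widehat{B}$ are obtained from the arcs of $B$ by identifying the top arc labeled $t_k$ with the bottom arc labeled $b_k$ for each $k$. Therefore, given a coloring $f\colon \mathcal{FR}(\widehat{B})\to X$, restricting to $B$ gives a coloring whose top labels $(f(t_1),\dots,f(t_n))=:(x_1,\dots,x_n)\in X^n$ determine, via the rack coloring rule, the bottom labels $(b_1,\dots,b_n)$, and the identification forces $b_k=x_k$ in $X$ for every $k$. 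Conversely, any top coloring $(x_1,\dots,x_n)$ of $B$ whose induced bottom coloring equals $(x_1,\dots,x_n)$ descends to a valid $X$-coloring of $\widehat{B}$. This sets up a bijection
\[
\operatorname{Hom}(\mathcal{FR}(\widehat{B}),X) \;\longleftrightarrow\; \bigl\{(x_1,\dots,x_n)\in X^n : \text{the induced bottom coloring of } B \text{ equals } (x_1,\dots,x_n)\bigr\}.
\]

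Next I would translate this bijection into the language of the matrix. Indexing rows and columns of $\Phi_X^{M_{m^n}}(B)$ by the same enumeration of $X^n$, the $(i,i)$-entry equals $1$ precisely when the top coloring indexed by $i$ propagates through $B$ to produce the bottom coloring also indexed by $i$, and equals $0$ otherwise. Hence
\[
\operatorname{tr}\!\bigl(\Phi_X^{M_{m^n}}(B)\bigr)=\sum_{i\in X^n}\bigl(\Phi_X^{M_{m^n}}(B)\bigr)_{i,i}=\#\bigl\{i\in X^n : \text{top coloring $i$ induces bottom coloring $i$}\bigr\}.
\]
Combining this with the bijection above yields $\operatorname{tr}(\Phi_X^{M_{m^n}}(B))=\lvert\operatorname{Hom}(\mathcal{FR}(\widehat{B}),X)\rvert=\operatorname{Col}_X(\widehat{B})$.

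The argument is essentially bookkeeping, so there is no serious obstacle; the only point requiring care is to verify that the identification of top and bottom arcs in $\widehat{B}$ introduces no additional relations in $\mathcal{FR}(\widehat{B})$ beyond $b_k=t_k$. This is immediate because the closure arcs carry no crossings, so the presentation of $\mathcal{FR}(\widehat{B})$ is precisely $\mathcal{R}(B)$ subject to the $n$ relations $b_k=t_k$, and an $X$-coloring of $\widehat{B}$ is an element of $\operatorname{Hom}(\mathcal{R}(B),X)$ satisfying exactly these relations, i.e., a self-matching top coloring.
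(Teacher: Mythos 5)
Your argument is correct and follows essentially the same route as the paper's proof: both identify the valid colorings of $\widehat{B}$ with top colorings of $B$ whose induced bottom colorings coincide with them, and observe that these are counted exactly by the diagonal entries of $\Phi_X^{M_{m^n}}(B)$, hence by the trace. Your version is simply more explicit about the bijection and about why the closure arcs impose only the relations $b_k = t_k$, which is a welcome but not essentially different elaboration.
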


\begin{proof}
Each entry $(\Phi_X^{M_{m^n}}(B))_{i,j}$ is $1$ if the coloring of the top arcs of $B$ indexed by $i$ yields the bottom coloring indexed by $j$ under the rack coloring rules. In the closure $\widehat{B}$, each top arc is connected to the corresponding bottom arc, so a valid rack coloring of $\widehat{B}$ with respect to $X$ must have matching top and bottom colors.

Therefore, the number of such colorings is equal to the trace of the rack counting matrix. 
\end{proof}

\section{Examples}\label{Example} 

In this section, we compute the rack counting matrix for various braids. Specifically, we provide an example of the rack counting invariant of an oriented framed knot, computed using the rack counting matrix. Additionally, we present examples of the rack counting matrix invariant for braids whose closures represent knots with up to five crossings. The computations in this section were done using a custom Python code.

\begin{example}
    Let $B = \sigma_1\sigma_1\sigma_1$ be a 2-braid and let $X = R_3$ be the dihedral quandle of order 3. 
    Then $$\Phi_X^{M_{3^2}}(B)=\begin{bmatrix}
    1 & 0 & 0 & 0 & 0 & 0 & 0 & 0 & 0\\
    0 & 1 & 0 & 0 & 0 & 0 & 0 & 0 & 0\\
    0 & 0 & 1 & 0 & 0 & 0 & 0 & 0 & 0\\
    0 & 0 & 0 & 1 & 0 & 0 & 0 & 0 & 0\\
    0 & 0 & 0 & 0 & 1 & 0 & 0 & 0 & 0\\
    0 & 0 & 0 & 0 & 0 & 1 & 0 & 0 & 0\\
    0 & 0 & 0 & 0 & 0 & 0 & 1 & 0 & 0\\
    0 & 0 & 0 & 0 & 0 & 0 & 0 & 1 & 0\\
    0 & 0 & 0 & 0 & 0 & 0 & 0 & 0 & 1
    \end{bmatrix}.$$

    Computing the trace of the matrix counting invariant, we obtain $\textup{Col}_X(\widehat{B})=tr(\Phi_X^{M_{3^2}}(B)) = 9$. The closure of the braid $B$ is the knot in Example~\ref{trefoil}, and we obtain that the rack counting invariant of this knot with respect to $R_3$ is 9. 
\end{example}

\begin{example}
Let $X=R_3$ be the dihedral quandle of order 3. We compute the matrix counting invariant for all braids whose closure represent knots with up to 5 crossings. We collect the results for 2-braids in Table~\ref{table:2braids} and the results for 3-braids in Table~\ref{table:3braids}.

\begin{longtable}{|c|>{\centering\arraybackslash}m{3cm}|c|c|}

     \hline
     \centering
    $\Phi_X^{M_{3^2}}(B)$ & $\textup{tr}(\Phi_X^{M_{3^2}}(B))$ & \textbf{$2$-Braid $B$} & $\widehat{B}$ \\
    \hline
    \endfirsthead

    \raisebox{3pt}[3cm][2.5cm]{$\mOne$} & 9 & \begin{minipage}{3cm}
    \begin{itemize}
    \item $\sigma_1\sigma_1^{-1}$
    \item $\sigma_1\sigma_1\sigma_1$ 
    \end{itemize} \end{minipage}& \begin{minipage}{2cm}
    \begin{itemize}
    \item $0_1$
    \item $3_1$ 
    \end{itemize} \end{minipage}\\
    \hline
    \raisebox{3pt}[3cm][2.5cm]{$\mTwo$} & 3 & $\sigma_1\sigma_1\sigma_1\sigma_1\sigma_1$ & $5_1$\\
    \hline
       \caption{Invariants for 2-braids}
    \label{table:2braids}
\end{longtable}

\begin{longtable}{|c|>{\centering\arraybackslash}m{3cm}|c|c|}

     \hline
     \centering
    $\Phi_X^{M_{3^3}}(B)$ & $\textup{tr}(\Phi_X^{M_{3^3}}(B))$ & \textbf{$3$-Braid $B$} & $\widehat{B}$ \\
    \hline
    \endfirsthead

    \raisebox{3pt}[4cm][3.5cm]{$\mFour$} 
    & 3 &$\sigma_2\sigma_1^{-1}\sigma_2\sigma_1^{-1}$& $4_1$\\
    \hline
    \raisebox{3pt}[4cm][3.5cm]{$\mFive$} & 3 & $\sigma_2\sigma_2\sigma_2\sigma_1\sigma_2^{-1}\sigma_1$& $5_2$ \\
    \hline
       \caption{Invariants for 3-braids}
    \label{table:3braids}
\end{longtable}
\end{example}
\bibliography{Ref}
\bibliographystyle{plain}

\end{document}